\newcommand{\sysn}{\left\{\begin{array}{rcl}}
\newcommand{\sysk}{\end{array}\right.}
\newcommand{\ingrw}[2]{\includegraphics[width=#1mm]{#2}}
\newtheorem{theorem}{Theorem}[section]
\theoremstyle{example}
\newtheorem{example}[theorem]{Example}
\newtheorem{proposition}[theorem]{Proposition}
\newtheorem{corollary}[theorem]{Corollary}
\theoremstyle{definition}
\newtheorem{definition}[theorem]{Definition}
\newtheorem{question}[theorem]{Question}
\journal{...}
\begin{document}

\title{Velichko's notions close to sequentially separability and their hereditary variants in $C_p$-theory}

\author{Alexander V. Osipov}

\address{Krasovskii Institute of Mathematics and Mechanics, \\ Ural Federal
 University, Yekaterinburg, Russia}

\ead{oab@list.ru}

%%%%%%%%%%%%%%%%%%% NOTATION %%%%%%%%%%%%%%%

\begin{abstract} A space $X$ is {\it sequentially separable} if there is a countable $S\subset X$ such that every point of $X$ is the limit of a sequence of points from $S$.
 In 2004, N.V. Velichko defined and investigated concepts close to sequentially separability: {\it $\sigma$-separability} and {\it $F$-separability}.
The aim of this paper is to study $\sigma$-separability and $F$-separability (and their hereditary variants) of the space $C_p(X)$  of
all real-valued continuous functions, defined on a Tychonoff space $X$, endowed with the pointwise convergence topology.  In particular, we proved that $\sigma$-separability coincides with sequential separability. Hereditary variants (hereditarily $\sigma$-separablity and hereditarily $F$-separablity) coincides with Fr\'{e}chet--Urysohn property in the class of cosmic spaces.

\end{abstract}

\begin{keyword}  sequentially separable \sep $\sigma$-separable \sep $F$-separable \sep strongly sequentially separable   \sep $C_p$-theory  \sep selection principles \sep Fr\'{e}chet--Urysohn

\MSC[2020]  54D65 \sep 54C35 \sep 54C65 \sep 54C05

\end{keyword}

\maketitle %

\section{Introduction}

If $X$ is a topological space and $A\subseteq X$, then the sequential closure of $A$, denoted by $[A]_s$, is the set of all limits of sequences from $A$. A set is called {\it sequentially closed} if $A=[A]_s$. We denote by $[A]$ or $\overline{A}$ the closure of $A$ in $X$.

Let us recall that a topological space $X$ is

$\bullet$ {\it separable} if it contains a countable everywhere dense set;

$\bullet$ {\it sequentially separable} if $X$ contains a countable set $A$ sequentially dense in $X$, i.e. $X=[A]_s$;

$\bullet$ {\it hereditarily sequentially separable} if every subspace of $X$ is sequentially separable;

$\bullet$ {\it strongly sequentially separable} if it is separable and every dense countable subspace is sequentially dense;

$\bullet$ {\it Fr\'{e}chet-Urysohn} if for
every $A\subset X$ and $x\in \overline{A}$ there exists a sequence
in $A$ converging to $x$.

\medskip
It is obvious that every Fr\'{e}chet-Urysohn separable space is strongly sequentially separable.
It is clear that a hereditarily sequentially separable space is sequentially separable and a sequentially separable space is separable.

\medskip
In \cite{vel}, N.V. Velichko defined and investigated concepts close to sequentially separability such that $\sigma$-separability and $F$-separability.

\begin{definition} The {\it $\sigma$-closure} of a set $A$ in a space $X$ is the set $[A]_{\sigma}$ consisting of all points $x$ such that:

(a) $x\in [A]_s$;

(b) if a sequence $\langle x_n \rangle$ of points of $A$ converges to $x$ and $x\in [A\setminus \bigcup\{x_n: n\in \mathbb{N}\}]$, then there exists a sequence $\langle y_n \rangle$ of points of the set $A\setminus \bigcup\{x_n: n\in \mathbb{N}\}$ converging to $x$.
\end{definition}

\begin{definition}
A set $A$ is called {\it $\sigma$-dense} in $X$ if $X=[A]_{\sigma}$. A space $X$ is called {\it $\sigma$-separable} if it contains a countable $\sigma$-dense set.
\end{definition}

\begin{definition} The {\it $F$-closure} of a set $A$ in a space $X$ is the set $[A]_{F}$ consisting of all points $x$ such that if $x\in [A']$, $A'\subseteq A$, then there exists a sequence $\langle x_n \rangle$ of points of the set $A'$ converging to the point $x$.
\end{definition}

\begin{definition}
A set $A$ is called {\it $F$-dense} in $X$ if $X=[A]_{F}$. A space $X$ is called {\it $F$-separable} if it contains a countable $F$-dense set.
\end{definition}

A space $X$ is called {\it hereditarily $\sigma$-separable} ({\it hereditarily $F$-separable}) if every subspace of $X$ is $\sigma$-separable ($F$-separable).

A {\it network} in a space $X$ is defined as the family $\sigma$ of subsets of $X$ such that any open set of $X$ is the union of some subfamily of $\sigma$.
A space is a {\it cosmic space} if it has a countable network.

\newpage

We consider the following space notations:

$\bullet$ {\bf s} --- separable spaces;

$\bullet$ {\bf hs} --- hereditarily separable spaces;

$\bullet$ {\bf ss} --- sequentially separable spaces;

$\bullet$ {\bf hss} --- hereditarily sequentially separable spaces;

$\bullet$ {\bf sss} --- strongly sequentially separable spaces;

$\bullet$ {\bf FU} --- Fr\'{e}chet-Urysohn spaces;

$\bullet$ {\bf Fs} --- $F$-separable spaces;

$\bullet$ {\bf $\sigma$s} --- $\sigma$-separable spaces;

$\bullet$ {\bf h$\sigma$s} --- hereditarily $\sigma$-separable spaces;

$\bullet$ {\bf hFs} --- hereditarily $F$-separable spaces.

%$\bullet$ $nw(X)=\omega$ --- cosmic spaces.

\medskip

 Velichko proved that any cosmic space is hereditarily sequentially separable (Theorem 1 in \cite{vel}) and a space is hereditarily $\sigma$-separable if and only if it is hereditarily $F$-separable (Theorem 4 in \cite{vel}).

The following examples were constructed in \cite{vel}:

\medskip

(1) Separable sequential compact space that is non-sequential separable;

\medskip

(2) Sequential separable sequential compact space that is non-$\sigma$-separable;

\medskip

(3) Cosmic space that is non-$\sigma$-separable;

\medskip

(4) $F$-separable space that is non-Fr\'{e}chet-Urysohn;

\medskip

(5)  $\sigma$-separable space that is non-$F$-separable.

\medskip

 Thus we have the following the relationships between these classes of spaces and counterexamples (Diagram 1).

\begin{center}
\ingrw{90}{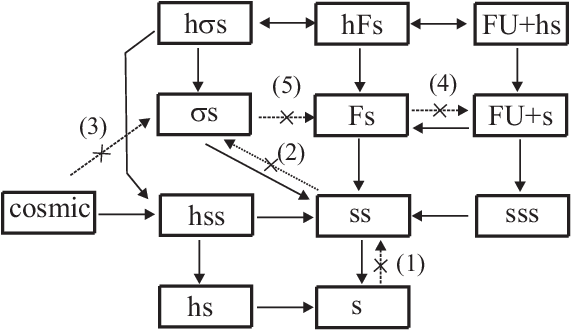}

\medskip

Diagram 1.

\end{center}

The set of positive integers is denoted by $\mathbb{N}$ and $\omega=\mathbb{N}\cup\{0\}$. Let $\mathbb{R}$ be the real line.
Let $X$ be a Tychonoff topological space, $C(X,\mathbb{R})$ be the
space of all  continuous functions on $X$ with values in
$\mathbb{R}$ and $\tau_p$ be the pointwise convergence topology.
Denote by $C_p(X)$ the topological space
$(C(X,\mathbb{R}), \tau_p)$.

 The symbol $\bf{0}$
stands for the constant function to $0$. A basic open neighborhood
of $\bf{0}$ in $C_p(X)$  is of the form $[F, (-\varepsilon, \varepsilon)]=\{f\in C_p(X)| \, f(F)\subset (-\varepsilon, \varepsilon)\}$, where $F$ is a finite subset of $X$ and $\varepsilon>0$.

The study of separability and sequential separability in function spaces can be found in many papers \cite{osi4,osi5,osi6,osi7,osi8}. In this paper we to study $\sigma$-separability and $F$-separability and their hereditary variants of the space $C_p(X)$. Unexpectedly, it turns out that $\sigma$-separability coincides with sequential separability, and hereditary analogues with strongly sequentially separability and hereditarily sequentially separability. In particular,  hereditary $\sigma$-separability of $C_p(X)$ coincides with Fr\'{e}chet--Urysohn property in the class of separable metrizable spaces $X$.

\section{Main results}

In \cite{Nob}, N. Noble proved that {\it a space $C_p(X)$ is separable if and only if $X$  has a coarser second countable topology}.

Thus, we assume that all spaces $X$ considered below are Tychonoff and has a coarser second countable topology.

\begin{definition} (Def. 2.2 in \cite{osi1}). A space $X$ has the {\it Velichko's property}, if there exists a continuous
bijection $f: X \rightarrow Y$ from a space $X$ on a separable metric space $Y$ such that $f(U)$ is an $F_{\sigma}$-set of $Y$ for
any cozero-set $U$ of $X$.
\end{definition}

In \cite{vel1} and \cite{GLM}, a criterion for the sequential separability of space $C_p(X)$ was obtained.

\begin{theorem}(\cite{vel1})\label{vel1} A space $C_p(X)$ is sequentially separable if and only if $X$ has the  Velichko's property.
\end{theorem}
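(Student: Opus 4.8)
The plan is to prove both implications through a single bridging idea: a continuous bijection $\varphi\colon X\to Y$ onto a separable metric space lets us transport each $g\in C(X)$ to the function $\tilde g:=g\circ\varphi^{-1}$ on $Y$, and the two properties in the statement are two faces of the assertion that every such $\tilde g$ is of Baire class one on $Y$. I will use the classical Lebesgue--Hausdorff characterization: for a real-valued function on a metric space, being of Baire class one is equivalent both to having $F_\sigma$ preimages of open sets and to being the pointwise limit of a sequence of continuous functions. Convergence in $C_p(X)$ is pointwise, and since $\varphi$ is a bijection, $g_j\to g$ in $C_p(X)$ iff $\tilde g_j\to\tilde g$ pointwise on $Y$; this dictionary is what ties sequential density to Baire class one.

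For the direction ``$C_p(X)$ sequentially separable $\Rightarrow$ Velichko's property'', fix a countable sequentially dense set $\{f_n\}\subset C_p(X)$. Being dense it separates the points of $X$, so $\varphi\colon X\to\mathbb{R}^{\omega}$, $\varphi(x)=(f_n(x))_n$, is a continuous injection; put $Y:=\varphi(X)$, a separable metric space, so that $\varphi\colon X\to Y$ is a continuous bijection. Given a cozero set $U=\{x:g(x)\neq0\}$, choose a subsequence with $f_{n_k}\to g$ pointwise. Then for $y=\varphi(x)$ we have $\tilde g(y)=g(x)=\lim_k f_{n_k}(x)=\lim_k(\pi_{n_k}|_{Y})(y)$, a pointwise limit of continuous functions on the metric space $Y$; hence $\tilde g$ is of Baire class one, and $\varphi(U)=\tilde g^{-1}(\mathbb{R}\setminus\{0\})$ is $F_\sigma$. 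Thus $\varphi$ witnesses Velichko's property.

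For the converse, let $\varphi\colon X\to Y$ be as in Velichko's property. For $g\in C(X)$ and open $W\subseteq\mathbb{R}$ the set $g^{-1}(W)$ is a cozero set, so $\tilde g^{-1}(W)=\varphi(g^{-1}(W))$ is $F_\sigma$ in $Y$; therefore $\tilde g$ is of Baire class one on the metric space $Y$, and consequently $\tilde g=\lim_k h_k$ pointwise for some $h_k\in C(Y)$. Pulling back, $g=\lim_k(h_k\circ\varphi)$ pointwise with $h_k\circ\varphi\in C(X)$, so the set $\varphi^{*}C(Y)=\{h\circ\varphi:h\in C(Y)\}$ is already sequentially dense in $C_p(X)$, and each limit is realized by an honest single sequence. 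It remains only to replace this uncountable set by a countable one.

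This last step is where I expect the real work to lie. Under the tilde-dictionary it amounts to finding a \emph{countable} $E\subseteq C(Y)$ whose pointwise sequential closure captures every Baire class one function of the form $\tilde g$; equivalently, a countable subset of $C(Y)$ that is sequentially dense not merely in $C_p(Y)$ but inside the larger space of Baire class one functions. The difficulty is that sequential closure is not idempotent, so one cannot simply take $E$ sequentially dense in $C(Y)$ and iterate. When $Y$ is compact the obstruction disappears: take $E$ countable and dense in $(C(Y),\|\cdot\|_\infty)$, write $\tilde g=\lim_k h_k$ pointwise and pick $e_k\in E$ with $\|e_k-h_k\|_\infty<1/k$; the triangle inequality then gives $e_k\to\tilde g$ pointwise, collapsing the two limits into one precisely because the inner approximation is \emph{uniform}. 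The general separable-metric case is the delicate point: since $(C(Y),\|\cdot\|_\infty)$ need not be separable, the diagonalization must instead be driven by the countable structure of $Y$, building all approximants from a fixed countable family (for instance rational combinations of the coordinate functions of the embedding $Y\hookrightarrow\mathbb{R}^{\omega}$ together with distance functions to basic closed sets) and arranging a single enumeration that converges pointwise to $\tilde g$. Making this single-sequence construction work for non-compact $Y$ is the crux of the proof.
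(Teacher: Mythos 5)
First, a point of reference: the paper does not prove this theorem at all --- it is quoted from Velichko's 2005 paper \cite{vel1} (see also \cite{GLM}), so there is no in-paper argument to compare yours against. Judged on its own, your forward direction is complete and correct: the diagonal map $\varphi=(f_n)_n$ into $\mathbb{R}^{\omega}$ is a continuous injection because a dense subset of $C_p(X)$ separates points, and the identity $\tilde g=\lim_k \pi_{n_k}|_Y$ exhibits $\tilde g$ as a pointwise limit of continuous functions on the metric space $Y$, whence $\varphi(U)=\tilde g^{-1}(\mathbb{R}\setminus\{0\})$ is $F_\sigma$. The Lebesgue--Hausdorff dictionary is exactly the right bridge, and this half needs nothing more.

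The converse, however, contains a genuine gap, and it is precisely the one you flag yourself: you establish that the (uncountable) set $\varphi^{*}C(Y)$ is sequentially dense in $C_p(X)$, but the theorem requires a \emph{countable} sequentially dense set, and since sequential closure is not idempotent you cannot interpose a countable sequentially dense subset of $C_p(Y)$ and iterate. What you actually need is: a countable $E\subseteq C(Y)$ such that every Baire class one function on the separable metric space $Y$ is the pointwise limit of a single sequence from $E$. Your compact-case argument (uniform separability of $C(Y)$ plus the $\|e_k-h_k\|_\infty<1/k$ diagonal) is correct but does not cover the general case, and your closing suggestion about rational combinations of coordinate functions is only a gesture at a construction. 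The standard way to close this is to embed $Y$ into the Hilbert cube $K=[0,1]^{\omega}$, invoke Kuratowski's extension theorem that a real-valued Baire class one function on an arbitrary subspace of a metric space extends to a Baire class one function on the whole space, and then run your compact-case diagonal in $C(K)$ with a countable uniformly dense family (rational polynomials in the coordinates, via Stone--Weierstrass), finally restricting to $Y$. Without that extension step (or the equivalent direct construction in \cite{vel1} and \cite{GLM}), the crux of the converse remains unproved rather than merely deferred.
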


\begin{theorem} The following assertions are equivalent:

\begin{enumerate}

\item $C_p(X)$ is sequentially separable;

\item $C_p(X)$ is $\sigma$-separable;

\item  $X$ has Velichko's property.

\end{enumerate}

\end{theorem}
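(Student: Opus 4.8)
The plan is to establish the cycle of implications $(1)\Rightarrow(2)\Rightarrow(3)\Rightarrow(1)$, using Theorem~\ref{vel1} to short-circuit the connection between sequential separability and Velichko's property. Since Theorem~\ref{vel1} already gives $(1)\Leftrightarrow(3)$, the real content is proving that $\sigma$-separability is no stronger than sequential separability, i.e. that the two notions coincide. The easy direction is $(2)\Rightarrow(1)$: comparing the two definitions, the $\sigma$-closure $[A]_\sigma$ is by clause~(a) a subset of the sequential closure $[A]_s$, so any $\sigma$-dense set is automatically sequentially dense. Hence every $\sigma$-separable space is sequentially separable, and this holds for arbitrary spaces with no use of the $C_p$ structure.

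The substantive implication is $(1)\Rightarrow(2)$, where I would use the specific algebraic and topological structure of $C_p(X)$. First I would invoke Theorem~\ref{vel1} to pass from ``$C_p(X)$ sequentially separable'' to ``$X$ has Velichko's property,'' giving a continuous bijection $g\colon X\to Y$ onto a separable metric space with $g(U)$ an $F_\sigma$ for every cozero set $U$. The idea is then to construct a countable set $A\subset C_p(X)$ that is not merely sequentially dense but \emph{$\sigma$-dense}, by exploiting the linear structure: one can arrange $A$ to be (or to contain) a countable $\mathbb{Q}$-linear or rational-polynomial combination of functions pulled back through $g$ from a countable dense family on $Y$. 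The key point is that $C_p(X)$ is a topological vector group, so if a sequence $f_n\to f$ with $f\in[A\setminus\bigcup\{f_n\}]$, then after a translation one may assume the limit is $\mathbf 0$ and replace the offending finitely-many (or countably-many) $f_n$ by perturbing them with small rational-coefficient ``corrections'' drawn from $A$, producing a new sequence $y_n\to f$ avoiding the excluded set. Thus clause~(b) of the $\sigma$-closure definition is satisfied essentially for free whenever $A$ is closed under the relevant countable rational operations and is sequentially dense.

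Concretely, the strategy for $(1)\Rightarrow(2)$ is: take a countable sequentially dense set $D$ provided by sequential separability, then enlarge it to a countable set $A$ that is still sequentially dense but additionally has enough internal ``room'' — for instance, closing $D$ under addition of functions from a fixed countable set of small-support rational bumps — so that removing any countably many of its members (those appearing in a convergent sequence) leaves a subset still able to approximate the limit point. Because $C_p(X)$ is homogeneous and the topology at $\mathbf 0$ is generated by the basic neighborhoods $[F,(-\varepsilon,\varepsilon)]$ controlled by finite sets $F\subset X$, one can always find within $A$ alternative approximating sequences that dodge any prescribed countable set, since a single finite constraint $F$ is insensitive to discarding countably many functions from a rich enough $A$. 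This verifies $x\in[A\setminus\bigcup\{x_n\}]\Rightarrow$ there is $y_n\to x$ from that smaller set, which is exactly clause~(b).

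The main obstacle I anticipate is the construction in $(1)\Rightarrow(2)$ of a sequentially dense set $A$ robust enough to satisfy clause~(b): one must guarantee that after deleting the countably many functions $\{x_n\}$ of an arbitrary convergent sequence, the residual set $A\setminus\bigcup\{x_n\}$ still sequentially approximates $x$ whenever $x$ lies in its ordinary closure. The delicate issue is that clause~(b) quantifies over \emph{all} convergent sequences simultaneously, so the single countable $A$ must be engineered once and for all to withstand every such deletion; the homogeneity and divisibility of the group $C_p(X)$, together with the freedom to use rational perturbations of arbitrarily small support granted by Velichko's property, is precisely what makes this possible. I would therefore expect the proof to hinge on a careful diagonal or closure-under-rational-operations argument showing that this robustness is automatic for a suitably saturated countable $A$, after which the equivalence follows by chaining $(1)\Leftrightarrow(3)$ from Theorem~\ref{vel1} with the elementary $(2)\Rightarrow(1)$.
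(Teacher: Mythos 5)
Your skeleton is the paper's: $(1)\Leftrightarrow(3)$ is quoted from Theorem~\ref{vel1}, $(2)\Rightarrow(1)$ is immediate from clause~(a) of the definition of $\sigma$-closure, and all the content sits in $(1)\Rightarrow(2)$, which you attack by the same idea the paper uses --- enlarge a countable sequentially dense set by small rational perturbations so that deleting the range of any convergent sequence still leaves an approximating sequence. However, your treatment of that key step is a statement of intent rather than an argument, and the one justification you do offer is not valid. You claim clause~(b) holds ``essentially for free'' because ``a single finite constraint $F$ is insensitive to discarding countably many functions from a rich enough $A$.'' This proves nothing: $A$ itself is countable, so discarding countably many members could a priori discard all of it; and even granting that $f$ remains in the ordinary closure of the residual set (which is a hypothesis of clause~(b), not something to be derived), meeting each basic neighborhood $[F,(-\varepsilon,\varepsilon)]$ separately does not produce a single convergent sequence --- producing one is exactly what has to be shown. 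You never say what the saturated set $A$ actually is, nor how the hypothesis that the deleted points form a sequence converging to $f$ is used.

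For comparison, the paper's construction is completely concrete and uses only translation by constants: from a sequentially dense $\{f_n\}$ it forms $B=\bigcup_n B_n$ with $B_n=\{f_n+r: r\in\mathbb{Q}\cap[0,2^{-n}]\}$. Given $h_i=f_{n_i}+r_i\to f$, the smallness of the tags forces $r_i\to 0$ and hence $f_{n_i}\to f$; one then picks rationals $q_i\in\mathbb{Q}\cap[0,2^{-n_i}]$ avoiding all the used tags $r_i$ and splits into two cases (only finitely many $i$ with $f=f_{n_i}$, or infinitely many) to manufacture an explicit new sequence $g_i=f_{n_i}+q_{s(i)}$ (resp.\ $f+q_j$) converging to $f$ inside $B\setminus\{h_i:i\in\mathbb{N}\}$. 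That case analysis, and the observation that a convergent sequence can only ``use up'' tags clustering at $0$, is the substance you are missing. Your detour through Velichko's property and the bijection $g\colon X\to Y$ is also unnecessary: no pullback of functions from $Y$ and no ``small-support bumps'' are needed, only the translation structure of $C_p(X)$ and rational constants.
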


\begin{proof} By Theorem \ref{vel1}, $(1)\Leftrightarrow(3)$.

 $(2)\Rightarrow(1)$. It is trivial.

$(1)\Rightarrow(2)$. Let $A=\{f_n: n\in \mathbb{N}\}$ be a countable sequentially dense set in $C_p(X)$.
Let $B=\bigcup\limits_{n\in \mathbb{N}} B_n$ where $B_n=\{f_n+r: r\in \mathbb{Q}\cap[0,\frac{1}{2^n}]\}$. It is clear that $B$ is a countable sequentially dense set in $C_p(X)$.

We claim that $B$ is $\sigma$-dense in $C_p(X)$. Let $f=\lim\limits_{i\rightarrow \infty} h_i$ where $\{h_i=f_{n_i}+r_i: i\in \mathbb{N}\}\subset B$ and $f\in [B\setminus \bigcup\{h_i: i\in \mathbb{N}\}]$.

Since $r_i\rightarrow 0$ ($i\rightarrow \infty$), $f=\lim\limits_{i\rightarrow \infty} f_{n_i}$. Let $q_i\in \mathbb{Q}\cap[0,\frac{1}{2^{n_i}}]$ for each $i\in \mathbb{N}$ and $\{r_i: i\in \mathbb{N}\}\bigcap \{q_i: i\in \mathbb{N}\}=\emptyset$.

(a) Suppose that there is $l\in \mathbb{N}$ such that $f\neq f_{n_i}$ for $i>l$.

Note that for every $i>l$  there is $s(i)\in\mathbb{N}$ such that   $s(i)\geq i$ and  $f_{n_i}+q_{s(i)}\neq f_{n_k}+r_k$ for any $k\in\mathbb{N}$. Otherwise the sequence  $(f_{n_i}+q_{s})$ ($s\in\overline{i,\infty})$  would be a subsequence of $(h_i)$ and it converged to the point $f$, but $(f_{n_i}+q_{s})$ converged to the point $f_{n_i}$.

Then the sequence $(g_i)=(f_{n_i}+q_{s(i)})$ such that $f=\lim\limits_{i\rightarrow \infty} g_i$ and  $\{g_i: i\in \mathbb{N}\}\subseteq B\setminus \bigcup\{h_i: i\in \mathbb{N}\}$.

(b) Suppose that $|\{i\in \mathbb{N}: f=f_{n_i}\}|=\omega$.

Then the sequence $(g_j)=(f_{n_j}+q_{j})=(f+q_{j})$ where $j\in \{i\in \mathbb{N}: f=f_{n_i}\}$ such that $f=\lim\limits_{j\rightarrow \infty} g_j$ and  $\{g_j\}\subseteq B\setminus \bigcup\{h_i: i\in \mathbb{N}\}$.

\end{proof}

A {\it cover} of a space is a family of {\it proper} subsets whose union is the entire space. For families $\mathcal{A}$ and $\mathcal{B}$ of covers of a space, the property that every cover in the family $\mathcal{A}$ has a subcover in the family $\mathcal{A}$ is denoted $\binom{\mathcal{A}}{\mathcal{B}}$. An $\omega$-cover is a cover such that each finite subset of the space is contained in some set from the cover. A $\gamma$-cover is an infinite cover such that each point of the space belongs to all but finitely many sets from the cover.

An {\it open cover} is a cover by open sets.  Given a space, let $\Omega$ and $\Gamma$ be the families of open $\omega$-covers and $\gamma$-covers, respectively. The property $\binom{\Omega}{\Gamma}$ is the celebrated $\gamma$-property of Gerlits and Nagy, who proved that a space has this property if and only if the space $C_p(X)$ is Fr\'{e}chet--Urysohn (\cite{GN}, Theorem 2).
A space $X$ is called {\it $\gamma$-space} if $X$ has property $\binom{\Omega}{\Gamma}$.

\newpage

\begin{theorem}\label{1} The following assertions are equivalent:

\begin{enumerate}

\item $C_p(X)$ is strongly sequentially separable and hereditarily sequentially separable;

\item $C_p(X)$ is hereditarily $F$-separable;

\item $C_p(X)$ is hereditarily $\sigma$-separable;

\item $C_p(X)$ is strongly sequentially separable and hereditarily separable;

\item $C_p(X)$ is Fr\'{e}chet--Urysohn and hereditarily separable;
%\item $X$  is projectively $\binom{\Omega}{\Gamma}$ and $X^n$ is hereditarily Lindel\"{o}f for every  $n\in \mathbb{N}$;

\item $X$ is a $\gamma$-space and $X^n$ is perfect normal for every $n\in \mathbb{N}$;

\item $X$ is a $\gamma$-space and $X^n$ is hereditarily Lindel\"{o}f for every $n\in \mathbb{N}$;

\item $X$ is a $\gamma$-space and $X^{\omega}$ is perfect normal;

\item $X$ is a $\gamma$-space and $X^{\omega}$ is hereditarily Lindel\"{o}f;

\item $X$ is a $\gamma$-space and $X^{\omega}$ is hereditarily normal;

\item $X$ is a $\gamma$-space and $X^{\omega}$ is hereditarily countably paracompact.

\end{enumerate}

\end{theorem}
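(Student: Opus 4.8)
The plan is to split the eleven conditions into a ``function-space cluster'' $\{(1),(2),(3),(4),(5)\}$ and a ``ground-space cluster'' $\{(6),(7),(8),(9),(10),(11)\}$, to prove each cluster internally equivalent, and to join the two by the single bridge $(5)\Leftrightarrow(7)$. Two external results do most of the linking. The Gerlits--Nagy theorem (Theorem 2 of \cite{GN}) identifies ``$C_p(X)$ is Fr\'echet--Urysohn'' with ``$X$ is a $\gamma$-space'', and the classical Zenor--Arhangel'skii duality $hd(C_p(X))=\sup_n hL(X^n)$ identifies ``$C_p(X)$ is hereditarily separable'' with ``every finite power $X^n$ is hereditarily Lindel\"of''; combining these gives $(5)\Leftrightarrow(7)$ at once. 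Theorem 4 of \cite{vel} gives $(2)\Leftrightarrow(3)$ for free. Throughout I use the standing assumption that $X$ has a coarser second countable topology, so that $C_p(X)$ is separable and the implication ``Fr\'echet--Urysohn $+$ separable $\Rightarrow$ strongly sequentially separable'' from the Introduction is available.

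For the function-space cluster I would close the cycle $(1)\Rightarrow(4)\Rightarrow(5)\Rightarrow(2)\Rightarrow(1)$. Here $(1)\Rightarrow(4)$ is immediate, since hereditary sequential separability implies hereditary separability. For $(5)\Rightarrow(2)$: any subspace $Z\subseteq C_p(X)$ is Fr\'echet--Urysohn (heredity of \textbf{FU}) and separable (hereditary separability), and in a Fr\'echet--Urysohn separable space any countable dense set $D$ is automatically $F$-dense, because $F$-density of $D$ is exactly the Fr\'echet--Urysohn property applied to subsets of $D$; hence every subspace is $F$-separable, i.e.\ $(2)$. For $(2)\Rightarrow(1)$: an $F$-dense set is sequentially dense, so hereditary $F$-separability yields hereditary sequential separability; and applying $F$-separability to the subspace $A\cup\{x\}$ produces, for every $x\in\overline{A}$, a sequence from $A$ converging to $x$, so $C_p(X)$ is Fr\'echet--Urysohn and therefore (being separable) strongly sequentially separable. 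The remaining arrow $(4)\Rightarrow(5)$, that strong sequential separability together with hereditary separability forces the full Fr\'echet--Urysohn property, is the crux of this cluster: I would use the Zenor duality to turn hereditary separability into hereditary Lindel\"ofness of all $X^n$ (so every open $\omega$-cover reduces to a countable one), and then exploit the selective character of strong sequential separability, via the Gerlits--Nagy correspondence between sequences in $C_p(X)$ tending to $\mathbf{0}$ and $\gamma$-covers of $X$, to extract a $\gamma$-subcover and so recover the $\gamma$-property.

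For the ground-space cluster, every condition carries the hypothesis that $X$ is a $\gamma$-space, a property preserved under finite powers; in particular each $X^n$ is Lindel\"of. The key elementary lemma is that a Lindel\"of perfect space is hereditarily Lindel\"of: every open subset is an $F_\sigma$, hence a countable union of closed, therefore Lindel\"of, subspaces, so every open subspace is Lindel\"of. Together with the converse fact that a regular hereditarily Lindel\"of space is perfectly normal, this yields $(6)\Leftrightarrow(7)$ and, once the $\omega$-power is known to be Lindel\"of, $(8)\Leftrightarrow(9)$. The downward implications from the $\omega$-power to the finite powers ($(8)\Rightarrow(6)$, $(9)\Rightarrow(7)$, and the versions of $(10),(11)$) hold because each $X^n$ embeds as a closed subspace of $X^\omega$ and all the listed properties are closed-hereditary. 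Finally, perfect normality is hereditary and implies both hereditary normality and hereditary countable paracompactness, giving $(8)\Rightarrow(10)$ and $(8)\Rightarrow(11)$.

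The two genuinely hard directions remain. First, the converses $(10)\Rightarrow(6)$ and $(11)\Rightarrow(6)$: for $(10)$ I would invoke Kat\v{e}tov's theorem, so that hereditary normality of $X^\omega$ (hence of $X^{2n}$ for every $n$) forces perfect normality of each finite power $X^n$; for $(11)$ I would run the analogous argument through a theorem of Mack relating hereditary countable paracompactness and normality to perfectness. Second, the ``reflection upward'' from all finite powers to the $\omega$-power (e.g.\ $(6)\Rightarrow(8)$), which also subsumes the point that $X^\omega$ is Lindel\"of; since in general a property of all finite powers need not pass to $X^\omega$, this step must combine the $\gamma$-property with the countable $i$-weight hypothesis, and it is the part I expect to be the main obstacle on the ground-space side, parallel to the crux $(4)\Rightarrow(5)$ on the function-space side.
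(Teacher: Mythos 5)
Your overall architecture is sound and rests on the same external pillars as the paper's proof (the Gerlits--Nagy theorem, the Velichko/Zenor duality between hereditary separability of $C_p(X)$ and hereditary Lindel\"ofness of all finite powers $X^n$, Theorem 4 of \cite{vel} for $(2)\Leftrightarrow(3)$, and Kat\v{e}tov-type product theorems), but your organization of the function-space cluster is genuinely different and in places cleaner. The paper runs one long cycle $(1)\Rightarrow(2)\Leftrightarrow(3)\Rightarrow(4)\Rightarrow(5)\Rightarrow(6)\Leftrightarrow(7)\Rightarrow(1)$, proving $(1)\Rightarrow(2)$ by invoking the selection principle $S_1(\Omega^{\omega}_{\bf 0},\Gamma_{\bf 0})$ from Theorem 7.5 of \cite{osi1}, and proving $(3)\Rightarrow(4)$ by a direct $\sigma$-density argument on $D\cup\{f\}$. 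You instead observe that a countable dense subset of a Fr\'{e}chet--Urysohn subspace is automatically $F$-dense (which gives $(5)\Rightarrow(2)$ with no selection principle at all), and conversely that hereditary $F$-separability applied to $A\cup\{x\}$ recovers the Fr\'{e}chet--Urysohn property outright; the latter needs the small extra remark that, points of $C_p(X)$ being closed, $x\in\overline{D\setminus\{x\}}$ whenever $x\in\overline{A}\setminus A$, so the $F$-density condition is not applied vacuously. This collapses the whole cluster onto the single hard arrow $(4)\Rightarrow(5)$, which you only sketch, but your sketch is exactly how the paper executes it: hereditary separability of $C_p(X)$ gives that all $X^n$ are hereditarily Lindel\"of, hence $X$ is an $\epsilon$-space, and then strong sequential separability yields the $\gamma$-property via Theorem 7.5 of \cite{osi1} and Theorem 3.6 of \cite{koc}.

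The one place where you are missing an idea rather than merely deferring details is the upward reflection $(6)\Rightarrow(8)$. The paper closes it with two facts: (i) a countable product is perfect if and only if all of its finite subproducts are perfect, and (ii) the property $\binom{\Omega}{\Gamma}$ is preserved by countable powers \cite{jmss}, so $X^{\omega}$ is again a $\gamma$-space and in particular Lindel\"of; together these make $X^{\omega}$ perfect and Lindel\"of, hence hereditarily Lindel\"of and perfectly normal. Your suspicion that the countable $i$-weight hypothesis must be woven into this step is unfounded; fact (i) is a ZFC theorem about arbitrary countable products and does all the work. Your Kat\v{e}tov route for $(10)\Rightarrow(6)$ does work (since $X^{\omega}\cong X^n\times X^{\omega}$ contains nontrivial convergent sequences once $X$ has at least two points, the alternative ``every countable subset of the second factor is closed'' is excluded), and it matches the paper's unproved fact (a) that for countable product spaces hereditary normality, perfect normality and hereditary countable paracompactness coincide.
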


\begin{proof} $(1)\Rightarrow(2)$. Let $A\subseteq C_p(X)$. Then $A$ is sequentially separable. Let $B\subseteq A$ be a countable sequentially dense subset of $A$ and $f\in [B']$ for $B'\subseteq B$. Since $C_p(X)$ is a homogeneous space, we can assume that $f={\bf 0}$.
By Theorem 7.5 in \cite{osi1}, $C_p(X)$ satisfies $S_1(\Omega^{\omega}_{\bf 0},\Gamma_{\bf 0})$, i.e. for each sequence $(C_n: n\in\mathbb{N})$ of countable subsets of $C_p(X)$ such that ${\bf 0}\in \bigcap [C_n]$ there is a sequence $(c_n : n\in \mathbb{N})$ such that for each $n$, $c_n\in C_n$, and $c_n\rightarrow {\bf 0}$ $(n\rightarrow \infty)$.

Let $C_n=B'$ for every $n\in \mathbb{N}$. Then there is $\{c_n : n\in \mathbb{N}\}\subseteq B'$ such that $c_n\rightarrow {\bf 0}$ $(n\rightarrow \infty)$.

$(2)\Leftrightarrow(3)$. By Theorem 4 in \cite{vel}.

$(3)\Rightarrow(4)$. Let $D$ be a countable dense subset of $C_p(X)$. Fix a point $f\in C_p(X)$. Then the set $D\cup \{f\}$ is $\sigma$-separable. Thus, there is $D'\subset D\cup \{f\}$ such that $f\in [D']_s$, i.e. there is $\{f_n: n\in \mathbb{N}\}\subseteq D'$ such that $f_n\rightarrow f$ $(n\rightarrow \infty)$. Suppose that there is $n'\in \mathbb{N}$ such that $f_n=f$ for $n>n'$. Note that $f\in [D'\setminus \{f\}]$. Then, by $\sigma$-separability of  $D\cup \{f\}$, there is $\{g_n: n\in \mathbb{N}\}\subseteq D'\setminus \{f\}$ such that $g_n\rightarrow f$ $(n\rightarrow \infty)$.
It remains to note that $D'\setminus \{f\}\subseteq D$.

$(4)\Rightarrow(5)$. By Velichko's Theorem (see  Theorem 2 in \cite{vel2} and Theorem II.5.33 in \cite{arch}), $X^n$ is hereditarily Lindel\"{o}f for every $n\in \mathbb{N}$. Then, $X$ is $\epsilon$-space, i.e.,
all finite powers of $X$ are Lindel\"{o}f (or, by Proposition in \cite{GN}, if every open $\omega$-cover of $X$ contains an at most countable $\omega$-subcover of $X$). By Theorem 7.5 in \cite{osi1} and Theorem 3.6 in \cite{koc}, $X$ has property $\binom{\Omega}{\Gamma}$, i.e. $X$ is a $\gamma$-space and, by Theorem 2 in \cite{GN},  $C_p(X)$ is Fr\'{e}chet--Urysohn.
%By Proposition 1 in \cite{vel2}, $C_p(X)^n$ is hereditarily separable for every  $n\in \mathbb{N}$.

$(5)\Rightarrow(6)$. By Theorem 2 in \cite{GN}, $X$ is a $\gamma$-space. By Velichko's Theorem (see  Theorem 2 in \cite{vel2} and Theorem II.5.33 in \cite{arch}), $X^n$ is hereditarily Lindel\"{o}f for every $n\in \mathbb{N}$. By Proposition 3.8.A.(b) in \cite{Eng}, $X^n$ is perfect normal for every $n\in \mathbb{N}$.

$(6)\Leftrightarrow(7)$. By Proposition 3.8.A.(b) in \cite{Eng}.

$(7)\Rightarrow(1)$. By Theorem 7.5 in \cite{osi1}, $C_p(X)$ satisfies $S_1(\Omega^{\omega}_{\bf 0},\Gamma_{\bf 0})$ and, by Theorem 2 in \cite{vel2}, $C_p(X)$ hereditarily separable. It follows that
$C_p(X)$ is strongly sequentially separable and hereditarily sequentially separable.

\medskip

From the following results, the remaining implications hold.

(a) A countable product space is hereditarily normal iff it is perfectly normal iff it is hereditarily
countably paracompact.

(b) A countable product space is perfect iff its all finite subproducts are perfect.

(c) The property $\binom{\Omega}{\Gamma}$ is preserved under taking finite or countable powers \cite{jmss}.

\end{proof}

Since the hereditarily separability of $C_p(X)$ is preserved under taking finite or countable powers (moreover, $C_p(X)$ is hereditarily separable iff $C_p(X^n)$ is hereditarily separable  iff $(C_p(X))^{\omega}$ is hereditarily separable (Proposition 1 in \cite{vel2})), we have the following corollary.

\begin{corollary} The following assertions are equivalent:

\begin{enumerate}

\item $C_p(X)$ is hereditarily $F$-separable;

\item $C_p(X)$ is hereditarily $\sigma$-separable;

\item $C_p(X^n)$ is hereditarily $\sigma$-separable for every $n\in \mathbb{N}$;

\item $C_p(X^n)$ is hereditarily $F$-separable for every $n\in \mathbb{N}$;

\item $C_p(X)^n$ is hereditarily $\sigma$-separable for every $n\in \mathbb{N}$;

\item $C_p(X)^n$ is hereditarily $F$-separable for every $n\in \mathbb{N}$;

\item $C_p(X)^{\omega}$ is hereditarily $\sigma$-separable;

\item $C_p(X)^{\omega}$ is hereditarily $F$-separable.

\end{enumerate}

\end{corollary}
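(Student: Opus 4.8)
The plan is to funnel all eight items through Theorem \ref{1} by reading each as hereditary $\sigma$-/$F$-separability of $C_p(W)$ for a single base space $W$. Two bookkeeping identities make this possible: $C_p(X)^n$ is canonically homeomorphic to $C_p\bigl(\bigoplus_{i=1}^{n}X\bigr)$ and $C_p(X)^{\omega}$ to $C_p\bigl(\bigoplus_{i\in\omega}X\bigr)$, where $\bigoplus$ is the topological sum, whereas $C_p(X^n)$ is $C_p$ of the $n$-th power. Thus every space occurring in (1)--(8) is $C_p(W)$ for $W$ one of $X$, $X^n$, $\bigoplus_{i=1}^{n}X$, $\bigoplus_{i\in\omega}X$, and each such $W$ is again Tychonoff with a coarser second countable topology (a finite power, a finite sum, or a countable sum of copies of $X$ all inherit this), so Theorem \ref{1} applies verbatim to each. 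First I would dispose of the odd/even pairs: hereditary $\sigma$-separability and hereditary $F$-separability agree for an arbitrary space (Theorem 4 in \cite{vel}, recorded as $(2)\Leftrightarrow(3)$ of Theorem \ref{1}), and applying this to $C_p(X)$, $C_p(X^n)$, $C_p(X)^n$ and $C_p(X)^{\omega}$ yields at once $(1)\Leftrightarrow(2)$, $(3)\Leftrightarrow(4)$, $(5)\Leftrightarrow(6)$ and $(7)\Leftrightarrow(8)$. It then suffices to link the four $\sigma$-versions (2), (3), (5), (7).

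By the equivalence $(3)\Leftrightarrow(5)$ of Theorem \ref{1}, for each base space $W$ the space $C_p(W)$ is hereditarily $\sigma$-separable if and only if $C_p(W)$ is simultaneously Fr\'{e}chet--Urysohn and hereditarily separable. So the task splits into showing that ``hereditarily separable'' and ``Fr\'{e}chet--Urysohn'' each propagate uniformly across the four families. For hereditary separability this is precisely Proposition 1 in \cite{vel2}: $C_p(X)$ is hereditarily separable iff $C_p(X^n)$ is (for every $n$) iff $C_p(X)^{\omega}$ is. The one case not literally listed there, $C_p(X)^n$, is squeezed between these two, since $C_p(X)$ embeds as a subspace of $C_p(X)^n$ and $C_p(X)^n$ embeds as a closed subspace of $C_p(X)^{\omega}$ (freeze the remaining coordinates at $\mathbf 0$), and hereditary separability passes to subspaces.

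For the Fr\'{e}chet--Urysohn property I would invoke the Gerlits--Nagy correspondence $C_p(W)$ Fr\'{e}chet--Urysohn $\Leftrightarrow$ $W$ a $\gamma$-space (\cite{GN}), reducing everything to the claim that $X$, all $X^n$, all $\bigoplus_{i=1}^{n}X$, and $\bigoplus_{i\in\omega}X$ are simultaneously $\gamma$-spaces. The ``downward'' directions are free: $C_p(X)$ is (homeomorphic to) a subspace of each of $C_p(X^n)$, $C_p(X)^n$ and $C_p(X)^{\omega}$ (via $f\mapsto f\circ\pi_1$, respectively by fixing coordinates at $\mathbf 0$), and the Fr\'{e}chet--Urysohn property is hereditary, so Fr\'{e}chet--Urysohnness of any of the larger objects descends to $C_p(X)$. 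Combining the uniform hereditary separability with the to-be-established uniform Fr\'{e}chet--Urysohnness and feeding both back through $(3)\Leftrightarrow(5)$ of Theorem \ref{1} then closes the cycle $(2)\Leftrightarrow(3)\Leftrightarrow(5)\Leftrightarrow(7)$.

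The main obstacle is the ``upward'' direction of the Fr\'{e}chet--Urysohn step, i.e. that $X$ being a $\gamma$-space forces its finite powers, its finite sums, and its countable sum to be $\gamma$-spaces as well. For the powers this is exactly fact (c) used in the proof of Theorem \ref{1} (the property $\binom{\Omega}{\Gamma}$ is preserved under finite and countable powers, \cite{jmss}), which settles the $C_p(X^n)$ column. The delicate point is stability of $\binom{\Omega}{\Gamma}$ under finite and \emph{countable topological sums}, which governs the $C_p(X)^n$ and $C_p(X)^{\omega}$ columns; here I expect the work to lie, and I would be wary of a naive argument. One can first replace a given open $\omega$-cover of $\bigoplus_{i\in\omega}X$ by a countable $\omega$-subcover, legitimately, because every finite power of $\bigoplus_{i\in\omega}X$ is itself a countable sum of finite powers of $X$ and hence Lindel\"{o}f, so the sum is an $\epsilon$-space; but extracting a $\gamma$-subcover on each summand separately and then merging is not immediate, since a plain summand-by-summand selection would demand a pseudo-intersection of the resulting index sets that need not exist. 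I would therefore carry this out through the selection form $S_1(\Omega,\Gamma)$ of the $\gamma$-property (which equals $\binom{\Omega}{\Gamma}$ by \cite{GN}), equivalently by exploiting that Fr\'{e}chet--Urysohnness of $C_p(X)$ is in fact the stronger, countably productive $\alpha_1$-type Fr\'{e}chet--Urysohn property for the topological group $C_p(X)$; either mechanism delivers Fr\'{e}chet--Urysohnness of $C_p(X)^{\omega}$ directly and thereby completes the proof.
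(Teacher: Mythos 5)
Your proof is correct and follows essentially the same route as the paper: funnel every item through Theorem \ref{1} (in particular its characterization ``Fr\'{e}chet--Urysohn and hereditarily separable''), identify the $\sigma$- and $F$-versions via Velichko's Theorem 4, and handle the hereditary-separability half by Proposition 1 of \cite{vel2} --- which is exactly the (one-sentence) justification the paper gives. The single step you rightly flag as delicate, that the $\gamma$-property of $X$ passes to finite and countable topological sums (equivalently, that Fr\'{e}chet--Urysohnness of $C_p(X)$ passes to $C_p(X)^{\omega}$), is a standard fact of $C_p$-theory which the paper leaves entirely implicit, and the mechanisms you name (the equivalence $\binom{\Omega}{\Gamma}=S_1(\Omega,\Gamma)$ from \cite{GN}, or the stronger convergence properties of the topological group $C_p(X)$) do close it.
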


By Theorem 1 in \cite{vel}, a cosmic space is hereditarily sequentially separable. Then we have the following corollary.

\begin{corollary}\label{cor} Let $X$ be a cosmic space. The following assertions are equivalent:

\begin{enumerate}

\item $C_p(X)$ is  strongly sequentially separable;

\item $C_p(X)$ is hereditarily $F$-separable;

\item $C_p(X)$ is hereditarily $\sigma$-separable;

\item $C_p(X)$ is Fr\'{e}chet--Urysohn;

\item  $X$ is a $\gamma$-space.

\end{enumerate}

\end{corollary}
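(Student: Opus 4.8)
The plan is to deduce Corollary~\ref{cor} directly from Theorem~\ref{1} by showing that, under the hypothesis that $X$ is cosmic, the additional conditions appearing in Theorem~\ref{1} (hereditary separability, hereditary sequential separability, and the perfect-normality/hereditary-Lindel\"of requirements on the powers of $X$) become automatic, so that each of items $(1)$--$(5)$ of the corollary collapses onto the corresponding item of the theorem. The engine for this collapse is the cited result of Velichko (Theorem~1 in \cite{vel}) that every cosmic space is hereditarily sequentially separable.

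\medskip
First I would observe that a cosmic space has a countable network, and this property is inherited by every subspace and preserved under finite and countable products; hence if $X$ is cosmic then so is each $X^n$ and $X^\omega$. Since $C_p(X)$ over a cosmic $X$ is again cosmic (it has a countable network), Velichko's theorem gives that $C_p(X)$ is hereditarily sequentially separable, and in particular sequentially separable, hence separable. This already supplies for free the ``hereditarily sequentially separable'' clause of item $(1)$ of Theorem~\ref{1}. Thus, for cosmic $X$, condition $(1)$ of Theorem~\ref{1} reduces to ``$C_p(X)$ is strongly sequentially separable,'' which is exactly item $(1)$ of the corollary.

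\medskip
Next I would handle the power conditions of the theorem. A cosmic space is hereditarily Lindel\"of (a countable network yields hereditary Lindel\"ofness), and since cosmicity is productive for countable products, $X^\omega$ is cosmic and therefore hereditarily Lindel\"of. By Proposition~3.8.A.(b) in \cite{Eng} this makes $X^\omega$ perfectly normal as well. Consequently items $(6)$--$(11)$ of Theorem~\ref{1} each reduce to the single condition ``$X$ is a $\gamma$-space,'' which is item $(5)$ of the corollary. Similarly, since $C_p(X)$ is cosmic it is hereditarily separable, so item $(5)$ of Theorem~\ref{1} (``Fr\'echet--Urysohn and hereditarily separable'') reduces to ``$C_p(X)$ is Fr\'echet--Urysohn,'' matching item $(4)$ of the corollary. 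Items $(2)$ and $(3)$ of the corollary are literally items $(2)$ and $(3)$ of the theorem and require no modification.

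\medskip
Assembling these reductions, every one of the corollary's five assertions is equivalent to a corresponding assertion of Theorem~\ref{1}, and since those are mutually equivalent, the corollary's assertions are mutually equivalent. The main point to get right is the bookkeeping that each dropped hypothesis really is automatic for cosmic $X$ --- specifically that cosmicity is inherited by subspaces, preserved under countable powers, and passes from $X$ to $C_p(X)$, together with the standard facts that a countable network forces hereditary Lindel\"ofness and (via \cite{Eng}) perfect normality. There is no genuine obstacle beyond invoking Velichko's theorem and the permanence properties of countable networks; the content of the corollary is entirely carried by Theorem~\ref{1}.
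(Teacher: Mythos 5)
Your proposal is correct and matches the paper's (very terse) justification: the paper derives the corollary from Theorem~\ref{1} precisely by noting that a cosmic $X$ makes $C_p(X)$ cosmic, hence hereditarily (sequentially) separable by Velichko's theorem, while the power conditions on $X$ hold automatically since countable powers of cosmic spaces are cosmic, hence hereditarily Lindel\"of and perfectly normal. You have simply written out in full the bookkeeping the paper leaves implicit.
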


It is consistent and independent for arbitrary $X$, that $C_p(X)$ is strongly sequentially separable if and only if $C_p(X)$ is Fr\'{e}chet--Urysohn. In fact it is consistent with ${\bf ZFC}$ that: $C_p(X)$ is strongly sequentially separable iff $X$ is countable iff $C_p(X)$ is separable and Fr\'{e}chet--Urysohn (Corollary 17 in \cite{GLM}).

\begin{proposition}\label{pr5} (Cons. ${\bf ZFC}$) $C_p(X)$ is separable and Fr\'{e}chet--Urysohn if and only if $C_p(X)$ is first countable.
\end{proposition}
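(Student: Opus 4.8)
The plan is to route the equivalence through the cardinality of $X$, using the classical fact that $C_p(X)$ is first countable exactly when $X$ is countable, combined with the consistency result quoted immediately above (Corollary 17 in \cite{GLM}). In this way the only genuinely set-theoretic content is pushed into that cited corollary, and everything else is ZFC.

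First I would record the elementary computation $\chi(C_p(X))=|X|$ for infinite $X$; in particular, $C_p(X)$ is first countable if and only if $X$ is countable. The nontrivial inclusion is that first countability forces $X$ to be countable: if $\{[F_n,(-1/k,1/k)] : n,k\in\mathbb{N}\}$ were a countable local base at $\mathbf{0}$, then $\bigcup_n F_n$ would be countable, so one could choose $x\in X\setminus\bigcup_n F_n$ and, by the Tychonoff property, a continuous $g$ vanishing on a prescribed $F_n$ with $g(x)\ge 1$; this contradicts that the corresponding basic neighborhood should sit inside $[\{x\},(-1,1)]$. The reverse inclusion is immediate, since for countable $X$ the space $C_p(X)$ embeds into $\mathbb{R}^{\omega}$ and is therefore separable metrizable. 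This one computation already settles the direction from first countability to separability together with the Fr\'{e}chet--Urysohn property, and it does so in ZFC: a first countable $C_p(X)$ has $X$ countable, hence $C_p(X)$ is separable metrizable, and metrizable spaces are Fr\'{e}chet--Urysohn.

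For the converse I would invoke Corollary 17 in \cite{GLM}, which is consistent with ZFC and states that $C_p(X)$ is separable and Fr\'{e}chet--Urysohn if and only if $X$ is countable. Working inside such a model, separability together with the Fr\'{e}chet--Urysohn property yields that $X$ is countable, and then the character computation of the previous paragraph shows that $C_p(X)$ is first countable. Combining the two directions in that model gives the asserted equivalence.

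The main obstacle is precisely this converse, and it explains why the statement can only be a consistency result: in ZFC alone it is false. Indeed, if $X$ is an uncountable $\gamma$-set of reals (whose existence is consistent with ZFC), then $X$ is second countable, so $C_p(X)$ is separable by Noble's theorem, and $X$ being a $\gamma$-space makes $C_p(X)$ Fr\'{e}chet--Urysohn by the Gerlits--Nagy theorem recalled above; yet $C_p(X)$ is not first countable, since $X$ is uncountable. Thus the entire force of the proposition rests on passing to a model — for instance one satisfying the Borel Conjecture, in which every $\gamma$-set is countable — where the separable Fr\'{e}chet--Urysohn examples are exactly the countable ones, and the quoted Corollary 17 supplies precisely such a model.
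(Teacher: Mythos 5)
Your proposal is correct and follows essentially the same route the paper intends: the proposition is justified by the consistency result quoted just before it (Corollary 17 in \cite{GLM}, that consistently $C_p(X)$ is separable and Fr\'{e}chet--Urysohn iff $X$ is countable) together with the standard fact that $C_p(X)$ is first countable iff $X$ is countable. Your explicit character computation and the remark about uncountable $\gamma$-sets explaining why the statement is only a consistency result are accurate additions, but the underlying argument is the same.
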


\section{Examples}

Let us consider separating examples in $C_p$-theory.

\begin{example} There is a strongly sequentially separable space $C_p(X)$ such that it is not  hereditarily $F$-separable (hereditarily $\sigma$-separable).
\end{example}

In (\cite{osi2}, Corollary 11), assume that $\aleph_1<\mathfrak{p}$,  we constructed a strongly sequentially separable space $C_p(X)$ such that it is not  Fr\'{e}chet--Urysohn. By Theorem \ref{1}, the space $C_p(X)$ is not  hereditarily $F$-separable (hereditarily $\sigma$-separable).

\begin{example} There is a sequentially separable space $C_p(X)$ such that it is not  hereditarily separable.
\end{example}

The Sorgenfrey line $\mathbb{S}$ is the set $\mathbb{R}$ with the topology generated by the half-open intervals $[a, b)$, for $a, b\in \mathbb{R}$. M. Patrakeev proved (constructively) that there exists a countable sequentially dense subset in the space $C_p(\mathbb{S})$. Thus, (see also Velichko's criterion (Theorem  \ref{vel1})), the space $C_p(\mathbb{S})$ is sequential separable. The product $\mathbb{S}\times \mathbb{S}$ is not Lindel\"{o}f, hence, $C_p(X)$ is not hereditarily separable (Theorem 2 in \cite{vel2} and Theorem II.5.33 in \cite{arch}).

\begin{example} There is a separable space $C_p(X)$ such that it is not ($\sigma$-separable)  sequentially separable and it is not hereditarily separable.
\end{example}

It is enough to consider a discrete space $X$ of cardinality $\mathfrak{c}$. Then, the space $C_p(X)=\mathbb{R}^\mathfrak{c}$ is separable, but it is not ($\sigma$-separable)  sequentially separable (by Theorem  \ref{vel1}). Since $X$ is not Lindel\"{o}f, $\mathbb{R}^\mathfrak{c}$ is not hereditarily separable (Theorem 2 in \cite{vel2}).

\begin{example} There is a cosmic space $C_p(X)$ such that it is not (hereditarily $\sigma$-separable) hereditarily $F$-separable.
\end{example}

It is enough to consider any uncountable separable metric space $X$ which it is not $\gamma$-space. Let $X=[0,1]$. A space $C_p(Y)$ is cosmic if and only if $Y$ is cosmic (Theorem I.1.3 in \cite{arch}). Hence, $C_p([0,1])$ is cosmic, but $C_p([0,1])$ is not Fr\'{e}chet--Urysohn (Theorem II.3.4 in \cite{arch}). Thus, by Corollary \ref{cor}, $C_p([0,1])$ is not (hereditarily $\sigma$-separable) hereditarily $F$-separable.

\begin{example} There is a hereditarily separable space $C_p(X)$ such that it is not cosmic.
\end{example}

In \cite{mich}, assume that $(CH)$, Michael constructed a non-cosmic space $X$ such that $X^{\omega}$ is hereditarily Lindel\"{o}f.
Then $C_p(X)$ is non-cosmic (Theorem I.1.3 in \cite{arch}) and it is hereditarily separable (Theorem 2 in \cite{vel2}).

\medskip

Clear that any countable space is a $\gamma$-space. Then, by Theorem \ref{1}, we have that $C_p(X)$ is a hereditarily $\sigma$-separable (hereditarily $F$-separable) space for any countable space $X$.

\begin{example} There is a hereditarily $\sigma$-separable (hereditarily $F$-separable) space $C_p(X)$ such that it is not first countable.
\end{example}

It is enough to consider any uncountable separable metric $\gamma$-space.

\section{Open questions}

\begin{question} Suppose that $C_p(X)$ is a hereditarily separable space. Is it true that $C_p(X)$ is hereditarily sequentially separable?

\end{question}

\begin{question} Suppose that $C_p(X)$ is a hereditarily sequentially separable space. Is it true that  $C_p(X)$ is cosmic?

\end{question}

\begin{question} Suppose that $C_p(X)$ is a sequentially separable space. Is it true that $C_p(X)$ is $F$-separable?

\end{question}

\begin{question} Suppose that $C_p(X)$ is a $F$-separable space. Is it true that $C_p(X)$ is Fr\'{e}chet--Urysohn?

\end{question}

Note that the space $C_p(\mathbb{S})$ is sequentially separable space, but is not strongly sequentially separable (Example 3.2). Thus, $C_p(\mathbb{S})$ answers either Question 4.3 or Question 4.4.

\begin{question} Is there an example of a separable Fr\'{e}chet--Urysohn space $C_p(X)$ that is not hereditarily $F$-separable (Fr\'{e}chet--Urysohn and hereditarily separable)?

\end{question}

  By Proposition \ref{pr5}, this example depends on a model of set theory.

\medskip

The next question is a weakening of Question 4.1, but nevertheless has independent interest.

\begin{question} Suppose that $C_p(X)$ is a hereditarily separable space. Is it true that $C_p(X)$ is sequentially separable?
\end{question}

%\begin{proposition} There is a hereditarily $\sigma$-separable (hereditarily $F$-separable) non-cosmic space $C_p(X)$.

%\end{proposition}

%\begin{proof}

%\end{proof}

%\section{Preliminaries}

\medskip

%\section{Resulting Diagram}

 Thus we have the following the relationships between studied classes of $C_p$-spaces, counterexamples and open questions (Diagram 2).

\begin{center}

\ingrw{100}{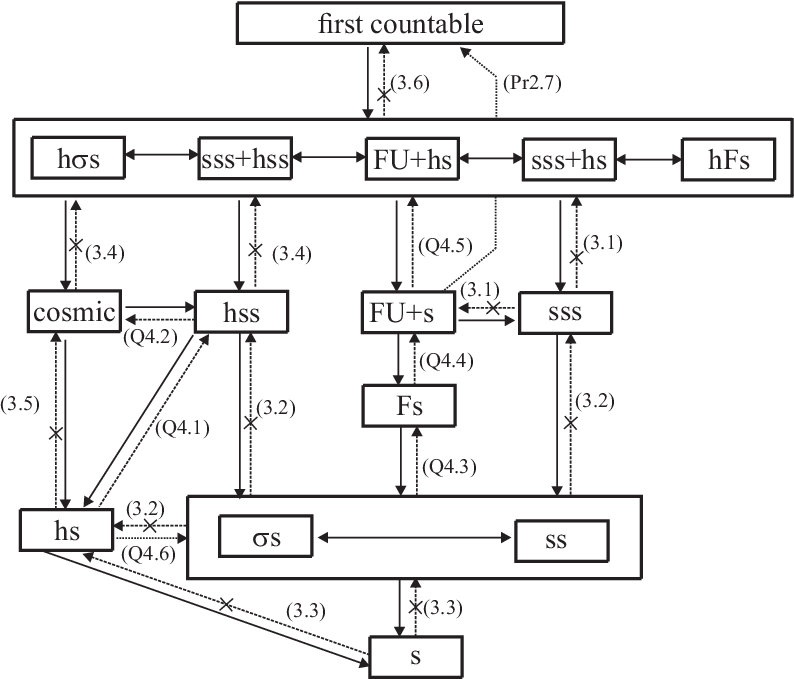}
\medskip

Diagram 2.

\end{center}

\begin {thebibliography}{00}

\bibitem{arch}
A.V. Archangel'skii, Topological function spaces. Math. its Appl.,
vol.~78, Dordrecht: Kluwer, 1992, 205~p. ISBN: 0-7923-1531-6\,.
Original Russian text published in {Arkhangel'skii A.V.} {\it
Topologicheskie prostranstva funktsii}, Moscow: MGU Publ., 1989,
222~p.

\bibitem{Eng}
R. Engelking, General Topology, Revised and completed edition,
Heldermann Verlag Berlin (1989).

\bibitem{GLM}
P. Gartside, J. Lo, A. Marsh, Sequential density, Topology and its Applications 130 (2003), 75--86.

\bibitem{GN}
J. Gerlits, Zs. Nagy, Some Properties of $C(X)$, I, Topology and its Applications 14 (1982), 151--161.

\bibitem{jmss}
W. Just, A.W. Miller, M. Scheepers and P.J. Szeptycki, The combinatorics of open covers (II), Topol. Appl., \textbf{73} (1996) 241--266.

\bibitem{koc}
Lj.D.R. Ko\v{c}inac, Selection principles and continuous images, Cubo Math. J., 8:2 (2006) 23--31.

\bibitem{mich}
E. A. Michael, Paracompactness and the Lindel\"{o}f property in finite and countable Cartesian
products, Comput. Math. 23 (1971), 199--214.

\bibitem{Nob}
N. Noble, The density character of function spaces, Proc. Am. Math. Soc.,42, No. 1, (1974) 228--233.

\bibitem{osi1}
A.V. Osipov, Projective versions of the properties in the Scheepers Diagram, Topology and its Applications, 278 (2020), 107232.

\bibitem{osi2}
A. V. Osipov, P. Szewczak, B. Tsaban, Strongly sequentially separable function spaces, via selection principles, Topology and its Applications, 270 (2020), 106942.

\bibitem{osi3}
A. V. Osipov, The Separability and Sequential Separability of the Space $C(X)$, Math. Notes, 104:1 (2018), 86--95.

\bibitem{osi4}
A. V. Osipov, On selective sequential separability of function spaces with the compact-open topology, Hacettepe Journal of mathematics and statistics, 48:6 (2019), 1761--1766.

\bibitem{osi5}
A. V. Osipov, On the different kinds of separability of the space of Borel functions, Open Mathematics, 16 (2018), 740--746.

\bibitem{osi6}
A. V. Osipov, S. \"{O}z\c{c}a\u{g}, Variations of selective separability and tightness in function spaces with set-open topologies, Topology and its Applications, 217 (2017), 38--50.

\bibitem{osi7}
A. V. Osipov, E. G. Pytkeev, On sequential separability of functional spaces, Topology and its Applications, 221 (2017), 270--274.

\bibitem{osi8}
A. V. Osipov, On separability of the functional space with the open-point and bi-point-open topologies, Acta Mathematica Hungarica, 150:1 (2016), 167--175.

\bibitem{vel}
N.V. Velichko, Notions close to separability, Proceedings of the Steklov Institute of Mathematics, suppl. 1, 2004, 30--35.

\bibitem{vel1}
N.V. Velichko, On sequential separability, Mathematical Notes, 78:5, (2005), 610--614.

\bibitem{vel2}
N. V. Velichko, Weak topology of spaces of continuous functions, Mat. Zametki, 30:5 (1981), 703--712 (in Russian); Math. Notes, 30:5 (1981), 849--854.

\end{thebibliography}

\end{document}